\newtheorem{definition}{Definition}
\newtheorem{theorem}{Theorem}
\newtheorem{lemma}{Lemma}
\journal{Computational Geometry, Theory and Applications}
\newcommand{\IR}{\mathbb{R}}
\begin{document}

\begin{frontmatter}

\title{Optimally solving a transportation problem using Voronoi diagrams\tnoteref{EuroGiga-Voronoi}}

\author[uniBonn]{Darius Gei\ss{}}\ead{wdarius@gmx.de}
\author[uniBonn]{Rolf Klein}\ead{rolf.klein@uni-bonn.de}
\author[uniBonn]{Rainer Penninger\corref{cor1}}\ead{penninge@cs.uni-bonn.de}
\cortext[cor1]{Corresponding author}
\author[uniBerlin]{G{\"u}nter Rote}\ead{rote@inf.fu-berlin.de}

\address[uniBonn]{Rheinische Friedrich-Wilhelms Universit{\"a}t Bonn, Institute of Computer Science I, Friedrich-Ebert-Allee 144, D-53113 Bonn, Germany.}
\address[uniBerlin]{Freie Universit{\"a}t Berlin, Institut f{\"u}r Informatik, Takustra{\ss}e 9, D-14195 Berlin, Germany.}

\tnotetext[EuroGiga-Voronoi]{This work was supported by the European Science Foundation (ESF) in the EUROCORES collaborative research project EuroGIGA/VORONOI.
The results were presented, in preliminary form,
at the
28th European Workshop on Computational Geometry (EuroCG'12), in Assisi, Italy, March 2012~\cite{EUROCG} and at the 18th Annual International Computing and Combinatorics Conference (COCOON 2012), Sidney, August 2012~\cite{COCOON}.
}

\begin{abstract}
 We consider the following variant of the well-known Monge-Kantorovich transportation problem.
Let $S$ be a set of $n$ point sites in $\IR^d$. A bounded set $C \subset \IR^d$ is to be distributed among
the sites $p \in S$ such that (i), each $p$ receives a subset $C_p$ of prescribed volume and (ii), 
the average distance of all points $z$ of $C$ from their respective sites $p$ is minimized.
In our model, volume is quantified by a measure $\mu$, and the distance between a site $p$ and a
point~$z$ is given by a function $d_p(z)$.
Under quite liberal technical assumptions on $C$ and on the functions $d_p(\cdot)$ we show that a solution 
of minimum total cost can be obtained by intersecting with~$C$ the Voronoi diagram of the sites in $S$, 
based on the functions $d_p(\cdot)$ equipped with suitable additive weights.
Moreover, this optimum partition is unique, up to sets 
 of measure zero.
Unlike the deep analytic methods of classical transportation theory, our proof is based directly on
 geometric arguments.
\end{abstract}

\begin{keyword}
Monge-Kantorovich transportation problem \sep earth mover's distance \sep Voronoi diagram with additive weights \sep Wasserstein metric

\end{keyword}

\end{frontmatter}



\section{Introduction}     \label{intro-sec}
In 1781, Gaspard Monge~\cite{m-mtdr-81} raised the following problem. 
Given two sets $C$ and $S$ of equal mass
in $\IR^d$, transport each mass unit of $C$ to a mass unit of $S$
at minimal cost. More formally, given two measures $\mu$ and $\nu$,
find a map $f$ satisfying $\mu(f^{-1}(\cdot)) = \nu(\cdot)$
that minimizes
\[
    \int \! d(z,f(z)) \, \mathrm{d} \mu(z),
\]
where $d(z,z')$ describes the cost of moving $z$ to $z'$.

Because of its obvious relevance to economics, and perhaps due to its
mathematical challenge, this problem has received  a lot of attention. 
Even with the Euclidean distance as cost function~$d$ it is not at all clear
in which cases an optimal map $f$ exists.
Progress by Appell~\cite{a-mdrsc-87} was honored with a prize by the 
Academy of Paris in 1887. Kantorovich~\cite{k-pm-48} achieved a breakthrough
by solving a relaxed version of Monge's original problem. 
In 1975, he received a Nobel prize in Economics; see 
Gangbo and McCann~\cite{gm-got-96} for mathematical and historical details.

While usually known as the {\em Monge-Kantorovich transportation problem},
the minimum cost of a transportation is sometimes called {\em Wasserstein metric}
or, in computer science, {\em earth mover's distance} between the two measures
$\mu$ and $\nu$. It can be used to measure similarity in image retrieval; 
see Rubner et al.~\cite{rtg-mdaid-98}.
If both measures $\mu$ and $\nu$ have finite support, Monge's problem becomes
the minimum weight matching problem for complete bipartite graphs, where edge weights
represent transportation cost; see Rote~\cite{r-tapm-09}, Vaidya~\cite{v-ghm-89} and Sharathkumar et al.~\cite{sa-atpgs-12}.

We are interested in the case where only measure $\nu$ has finite support. 
More precisely, we assume that a set $S$ of $n$ point sites $p_i$ is given, and
numbers $\lambda_i > 0$ whose sum equals~1. A body $C$ of volume~1 must be split into 
subsets $C_i$ of volume $\lambda_i$ in such a way that the total cost of transporting,
for each $i$, all points of $C_i$ to their site $p_i$ becomes a minimum.
In this setting, volume is measured by some measure $\mu$, and 
transport cost $d(z,z')$ by some measure of distance.

Gangbo and McCann~\cite{gm-got-96} report on the cases
where either $d(z,z') = h(z-z')$ with a strictly convex function $h$,
or $d(z,z') = l(|z-z'|)$ with a non-negative, strictly concave function $l$
of the Euclidean norm.
As a consequence of deep results on the general Monge-Kantorovich problem,  
they prove a surprising fact. The minimum cost partition of $C$ is given 
by the additively weighted Voronoi diagram of the sites $p_i$, based
on cost function $d$ and additive weights $w_i$. In this structure, the Voronoi
region of $p_i$ contains all points $z$ satisfying
\[
     d(p_i,z) - w_i \ < \  d(p_j,z) - w_j \mbox{ for all } j\not= i.
\]
Villani~\cite{v-oton-09} has more recently observed that this holds even for more general
cost functions, that need not be invariant under translations, and in the case where both distributions are continuous.

Figure~\ref{cones-fig} depicts how to obtain this structure in dimension $d=2$. For each point site $p\in S$
we construct in $\IR^{3}$ the cone $\{ (z,d(p,z)-w_p)\ | \ z \in \IR^2\}$.  The lower envelope of the cones, projected
onto the $XY$--plane, results in the Voronoi diagram.
\begin{figure}[hbtp]%
  \begin{center}%
    \includegraphics[scale=0.5,keepaspectratio]{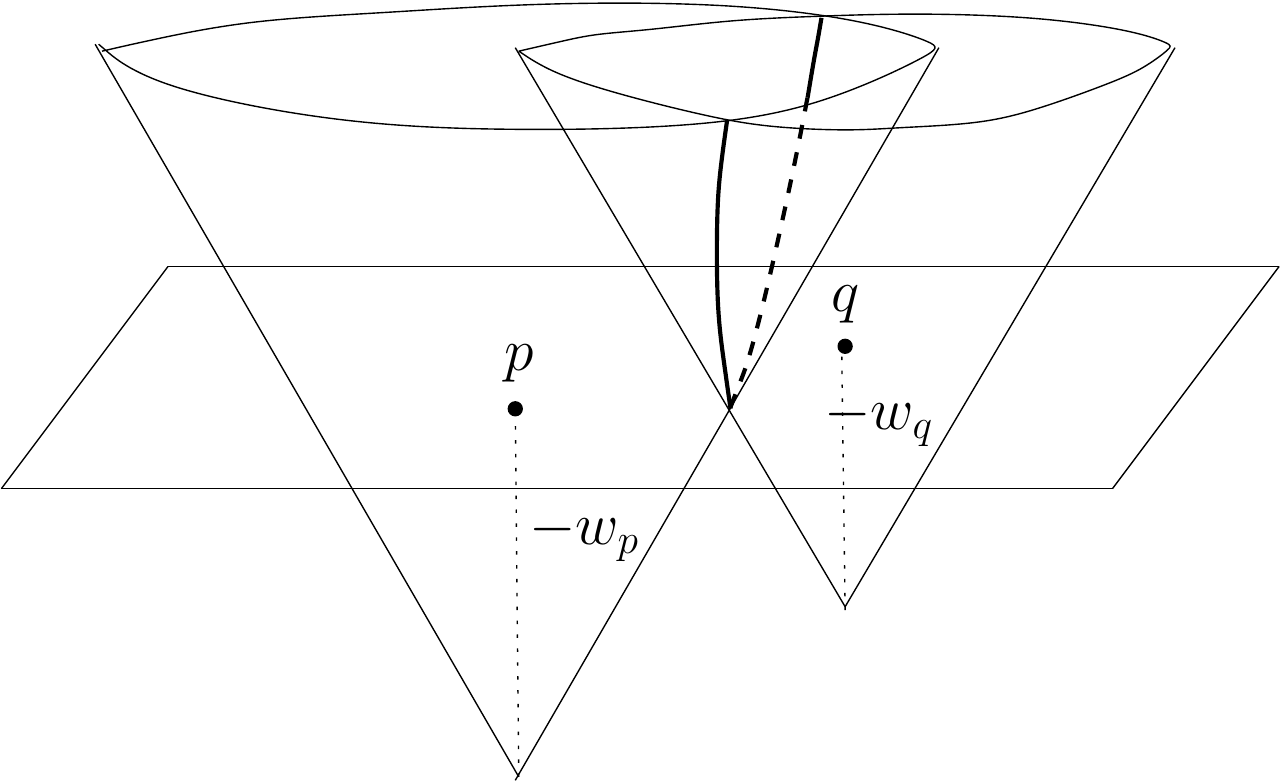}%
    \caption{An additively weighted Voronoi diagram as the lower envelope of cones.}%
    \label{cones-fig}
  \end{center}%
\end{figure}
Independently, Aurenhammer et al.~\cite{aha-mttls-98} have studied the case where $d(z,z')=|z-z'|^2$.
Here, a {\em power diagram} (see~\cite{ak-vd-99}) gives an optimum splitting of $C$. In addition
to structural results, they provide algorithms for computing the weights $w_i$.
Regarding the case where the transportation cost from point $z$ to site $p_i$ is given by individual cost functions $d_{p_i}(z)$, it is still unknown how to compute the weights $w_i$.

\vspace{\baselineskip}
In this paper we consider the situation where the cost $d(p_i,z)$ of transporting point $z$ to site $p_i$
is given by individual distance functions $d_{p_i}(z)$.
We require that the weighted Voronoi
diagram based on the functions $d_{p_i}(\cdot)$ is well-behaved, in the following sense.
Bisectors are $(d-1)$--dimensional, and increasing weight $w_i$ causes the bisectors of $p_i$ to sweep
$d$--space in a continuous way.
These requirements are fulfilled for the Euclidean metric and,
at least in dimension~2, if the distance function $d_p(z) = d(z-p)$ assigned to each site $p$ is a translate of the same strictly convex distance function $d(\cdot)$.

We show that these assumptions are strong enough to obtain the result of~\cite{gm-got-96,aha-mttls-98}:
The weighted Voronoi diagram based on the functions $d_{p_i}(\cdot)$ optimally solves the transportation problem,
if weights are suitably chosen.
Whereas~\cite{gm-got-96} derives this fact from a more general measure-theoretic theorem, our proof uses
the minimization of a quadratic objective function and
 geometric arguments.
The purpose of our paper is to show that such a simple proof is possible.

After stating some definitions in Section~\ref{defi-sec} we generalize arguments from~\cite{aha-mttls-98}
to prove, in Section~\ref{parti-sec}, that $C$ can be split into parts of arbitrary given
volumes by a weighted Voronoi diagram, for a suitable choice of weights.
Then, in Section~\ref{opti-sec},
we show that such a partition is optimal, and unique.
In Section~\ref{unique-sec} we show that if $C$ is connected, these weights are uniquely determined, up to addition of a constant.
%

\section{Definitions}     \label{defi-sec}

Let $\mu$ be a measure defined for all Lebesgue-measurable
subsets of $\IR^d$.
We assume that $\mu$ and the $d$-dimensional Lebesgue measure are mutually continuous, that is, $\mu$ vanishes exactly on the sets of Lebesgue measure zero.


Let $S$ denote a set of $n$ point sites in $\IR^d$. For each $p \in S$ we are given 
a continuous function 
\begin{eqnarray*}
d_p \colon \IR^d \to \IR_{\geq 0}
\end{eqnarray*}
that assigns to each point $z$ of $\IR^d$ a nonnegative value $d_p(z)$ as the ``distance''  
from site $p$ to $z$.

For $p \not=q \in S$ and $\gamma \in \IR$ we define
the \emph{bisector}
\[
     {B_\gamma}(p,q) := \{\, z \in \IR^d \mid d_p(z) - d_q(z) = \gamma\,\}
\] 
and the \emph{region}
\[
     {R_\gamma}(p,q) := \{\, z \in \IR^d \mid d_p(z) - d_q(z) < \gamma\,\}.
\]
The sets $R_{\gamma}(p,q)$ are open and increase with $\gamma$.
Now let us assume that for each site $p \in S$ an additive weight $w_p$ is given,
and let
\[
       w=(w_1, w_2, \ldots, w_n)
\]
denote the vector of all weights, according to some ordering $p_1, p_2, \ldots$  of $S$. Then,
$B_{w_i - w_j}(p_i,p_j)$ is called the {\em additively weighted bisector} of $p_i$ and $p_j$,  and
\[
     \mbox{VR}_w(p_i,S):= \bigcap_{j \not= i}  {R_{w_i -  w_j}}(p_i,p_j)
\]
is the {\em additively weighted Voronoi region} of $p_i$ with respect to $S$.
It consists of all points~$z$ for which $d_{p_i}(z) - w_i$ is smaller
than all values $d_{p_j}(z) - w_j$, by definition.
As usual, 
\[
        V_w(S) := \IR^d \setminus \bigcup_i \mbox{VR}_w(p_i,S)
\]
is called the {\em additively weighted Voronoi diagram} of $S$;
see~\cite{ak-vd-99}.  Clearly,  $\mbox{VR}_w(p_i,S)$ and $V_w(S)$ do not change if 
the same constant is added to all weights in $w$.
Increasing a single value $w_i$ will increase the size of $p_i$'s Voronoi region.

\begin{figure}[hbtp]%
  \begin{center}%
    \includegraphics[scale=0.42,keepaspectratio]{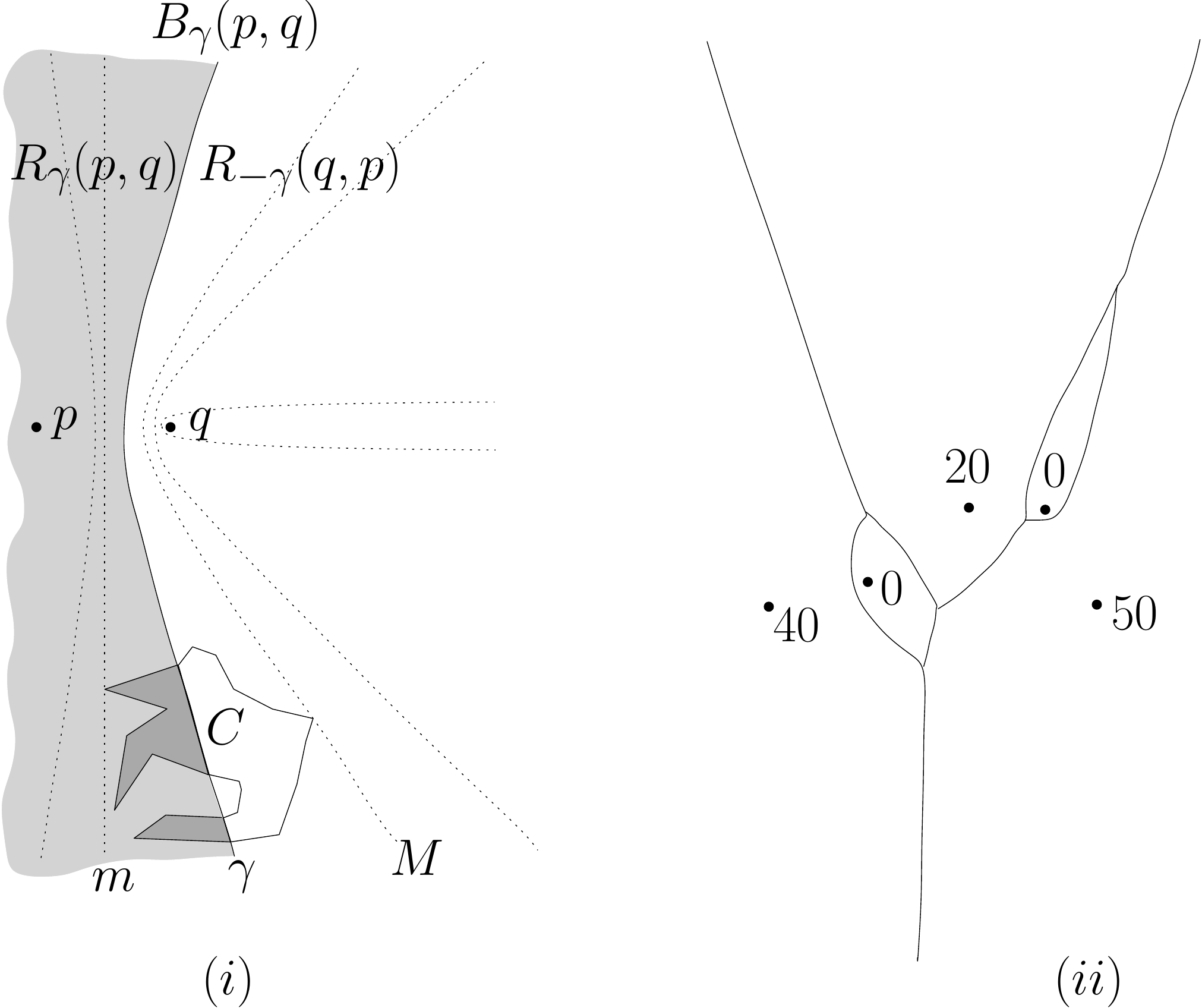}%
    \caption{(i) Sets $R_\gamma(p,q)$ for increasing values of $\gamma$.
                     (ii) An additively weighted Voronoi diagram.}%
    \label{hyperbolae-fig}
  \end{center}%
\end{figure}


\vspace{\baselineskip}
\noindent
{\bf Example.}   Let $d=2$ and $d_p(z)=|p-z|$ the Euclidean distance.
Given weights $w_p, w_q$, the bisector $B_{w_p - w_q}(p,q)$ is a line if $w_p=w_q$, 
and a branch of a hyperbola otherwise. Figure~\ref{hyperbolae-fig}(i) shows how 
$B_\gamma(p,q)$
 sweeps across the plane as $\gamma$ grows
from $-\infty$ to $\infty$. In fact, when $|\gamma|=|p-q|$, the bisector degenerates to a ray, and for larger  $|\gamma|$, the bisectors are empty.
The bisector $B_\gamma(p,q)$ forms the boundary of the set $R_\gamma(p,q)$, which increases with $\gamma$. 
Each bounded set $C$ in the plane will be reached at some point. 
From then on the volume of $C\cap R_\gamma (p,q)$ is continuously
growing until all of $C$ is contained in $R_\gamma(p,q)$.
Given $n$ points $p_j$ with additive weights $w_j$, raising the value of a single
weight $w_i$ will cause all sets $R_{w_i - w_j}(p_i, p_j)$ to grow until $C$ is fully
contained in the Voronoi region $V_w(p_i,S)$ of~$p_i$.

 Figure~\ref{hyperbolae-fig}(ii) shows an additively weighted Voronoi diagram $V_w(S)$ based on the Euclidean distance. 
It partitions the plane into~5 two-dimensional cells (Voronoi regions), and consists of~9 
cells of dimension~1 (Voronoi edges) 
 and of 5~cells
of dimension~0 (Voronoi vertices). Each cell is homeomorphic to an open sphere of appropriate dimension.

The next definition generalizes the above properties to the setting used in this paper.

\vspace{\baselineskip}
\begin{definition}       \label{admi-defi}
A system of continuous distance functions $d_p(\cdot)$, where $p \in S$, is called {\em admissible} if
for all $p \not= q \in S$, and for each bounded open set $C \subset \IR^d$,
there exist values $m_{pq}$ and $M_{pq}$ such that
$    \gamma \mapsto \mu \left(C \cap R_\gamma (p,q)\right)$
is continuously increasing from $0$ to $\mu(C)$ as 
$\gamma$ grows from $m_{pq}$ to $M_{pq}$.
Furthermore,
$C \cap R_\gamma (p,q) = \emptyset$ if $\gamma \leq m_{pq}$ and
$C \subset R_\gamma (p,q)$ if $M_{pq} \leq \gamma$.
\end{definition}

By symmetry we can assume w.~l.~o.~g.~$m_{qp} = - M_{pq}$ and $M_{qp} = - m_{pq}$;
see Figure~\ref{hyperbolae-fig}(i).
We will need the following structural property, which essentially says that bisectors have measure~0.
\begin{lemma}															\label{bisector-volume-lemm}
For a system of admissible distance functions $d_p(\cdot)$, where $p\in S$, for any two points $p\not=q \in S$ and any $\gamma \in \IR$, and 
for any bounded open set $C\subset \IR^d$, we have $\mu \left(C \cap B_\gamma (p,q)\right) = 0$.
\end{lemma}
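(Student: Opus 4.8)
The plan is to derive the result directly from the admissibility assumption in Definition~\ref{admi-defi}, exploiting the fact that the bisector $B_\gamma(p,q)$ is exactly the boundary between the nested open regions $R_{\gamma'}(p,q)$ for $\gamma' < \gamma$ and those for $\gamma' > \gamma$. The key observation is that the bisector sits in the difference of two regions: for any $\varepsilon > 0$ we have
\[
   C \cap B_\gamma(p,q) \subset C \cap \left( R_{\gamma+\varepsilon}(p,q) \setminus R_{\gamma-\varepsilon}(p,q) \right),
\]
because a point $z$ with $d_p(z) - d_q(z) = \gamma$ satisfies $d_p(z) - d_q(z) < \gamma + \varepsilon$ but fails $d_p(z) - d_q(z) < \gamma - \varepsilon$. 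This containment reduces the problem to estimating the measure of a thin ``shell'' between two regions, which admissibility controls.

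The main steps would be as follows. First I would fix $p \neq q$, the value $\gamma$, and the bounded open set $C$, and consider the function $g(\gamma') := \mu\left(C \cap R_{\gamma'}(p,q)\right)$. By Definition~\ref{admi-defi} this function is continuous (indeed continuously increasing) in $\gamma'$ on all of $\IR$ (constant equal to $0$ below $m_{pq}$, continuously increasing up to $\mu(C)$, then constant). Second, since the regions are nested and increasing, for $\varepsilon > 0$ we have
\[
   \mu\left( C \cap \left( R_{\gamma+\varepsilon}(p,q) \setminus R_{\gamma-\varepsilon}(p,q) \right) \right) = g(\gamma+\varepsilon) - g(\gamma-\varepsilon),
\]
using that $R_{\gamma-\varepsilon}(p,q) \subset R_{\gamma+\varepsilon}(p,q)$ and that $\mu$ is a measure. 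Third, combining this with the containment above gives
\[
   0 \le \mu\left(C \cap B_\gamma(p,q)\right) \le g(\gamma+\varepsilon) - g(\gamma-\varepsilon).
\]
Finally, letting $\varepsilon \to 0$ and invoking the continuity of $g$ forces the right-hand side to $0$, so $\mu\left(C \cap B_\gamma(p,q)\right) = 0$, as claimed.

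The step most in need of care is the boundary containment and the precise handling of the $R_\gamma$ being defined with a strict inequality ($<$) while the bisector uses equality. One must check that the open region $R_{\gamma-\varepsilon}$ genuinely excludes the bisector points and that the set-difference identity for the measure is legitimate; this only requires that $\mu$ is a (nonnegative, monotone, additive) measure, which holds. A subtlety worth a remark is that if $\gamma$ lies outside the interval $[m_{pq}, M_{pq}]$ the claim is trivial: when $\gamma < m_{pq}$ the set $C \cap R_\gamma(p,q)$ is empty and the bisector meets no point of $C$, while for $\gamma \ge M_{pq}$ the entire shell eventually has measure zero by the same continuity argument. The main conceptual obstacle, then, is not computational but rather recognizing that admissibility was tailored precisely to make the ``mass crossing the bisector'' vanish in the limit — the continuity of $g$ is doing all the work, and the geometric nesting of the regions is what lets continuity be applied.
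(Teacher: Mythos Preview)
Your argument is correct and follows essentially the same route as the paper: bound $\mu(C\cap B_\gamma(p,q))$ by the measure of a thin shell between nested regions and invoke the continuity of $\gamma'\mapsto \mu(C\cap R_{\gamma'}(p,q))$ from Definition~\ref{admi-defi}. The only cosmetic difference is that the paper uses a one-sided estimate, observing that $R_\gamma(p,q)$ and $B_\gamma(p,q)$ are disjoint subsets of $R_{\gamma+\varepsilon}(p,q)$, so $\mu(C\cap B_\gamma(p,q)) \le g(\gamma+\varepsilon)-g(\gamma)$, which already suffices; your two-sided shell $R_{\gamma+\varepsilon}\setminus R_{\gamma-\varepsilon}$ works just as well but is slightly more than needed.
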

\begin{proof}
 By definition of $R_\gamma(p,q)$ and $B_\gamma(p,q)$, for all $\varepsilon > 0$ and $\gamma\in \mathbb{R}$ the following inequality holds:
\[\mu(C\cap R_{\gamma + \varepsilon}(p,q)) \geq \mu(C\cap R_{\gamma}(p,q)) + \mu(C\cap B_{\gamma}(p,q))\]
We let $\varepsilon$ decrease to 0.
Since, according to Definition~\ref{admi-defi}, the function $\gamma \mapsto \mu \left(C \cap R_\gamma (p,q)\right)$ is continuous, we get
\[\lim_{\varepsilon \searrow 0} \left(\mu(C\cap R_{\gamma+\varepsilon}(p,q))\right) = \mu(C\cap R_{\gamma}(p,q)).\]
This implies $\mu(C\cap B_{\gamma}(p,q)) = 0$.
\end{proof}

\section{Partitions of prescribed size}     \label{parti-sec}
The following theorem shows that we can use an additively weighted Voronoi
diagram based on an admissible system of distance functions $d_p$ to partition $C$ into subsets of prescribed
sizes. 

\begin{theorem}          \label{partition-theo}
Let $n \geq 2$ and
let $d_{p_i}(\cdot)$,  $1 \leq i \leq n$,
be an admissible system as in 
Definition~\ref{admi-defi}.
 Let $C$ be a bounded open
subset of $\IR^d$. Suppose we are given $n$ real numbers 
$\lambda_i >0$ with
$\lambda_1 + \lambda_2 + \cdots + \lambda_n =\mu(C)$.
Then
there is a weight vector $w=(w_1, w_2, \ldots, w_n)$ such that
\[
      \mu(C \cap \mathrm{VR}_w(p_i,S)) = \lambda_i 
\]
holds for $1 \leq i \leq n$.

If, moreover, $C$ is pathwise connected
then $w$ is unique up to addition of a constant to all $w_i$, and
the parts $C_i = C \cap \mathrm{VR}_w(p_i,S)$ of this partition are unique.
\end{theorem}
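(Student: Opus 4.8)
The plan is to derive both statements from a single concave potential together with the measure-theoretic input of Lemma~\ref{bisector-volume-lemm}. For existence, I would introduce
\[
F(w) = \int_C \min_{1\le i\le n}\bigl(d_{p_i}(z)-w_i\bigr)\,\mathrm{d}\mu(z) + \sum_{i=1}^n \lambda_i\, w_i,
\]
which is concave in $w$ (an integral of pointwise minima of affine functions of $w$, plus a linear term) and invariant under replacing $w$ by $w+c(1,\dots,1)$, since both summands then change by $\pm c\,\mu(C)$. Hence it suffices to study $F$ on the hyperplane $H=\{w:\sum_i w_i=0\}$. I would first show coercivity on $H$: ordering the coordinates so that $w_1\ge\cdots\ge w_n$, the integrand is at most $d_{p_1}(z)-w_1$, so $F(w)\le \int_C d_{p_1}\,\mathrm{d}\mu + \sum_i \lambda_i(w_i-w_1)\le \int_C d_{p_1}\,\mathrm{d}\mu + \lambda_n(w_n-w_1)$, which tends to $-\infty$ as the spread $w_1-w_n\to\infty$ because $\lambda_n>0$; continuity of $F$ follows from dominated convergence on the bounded set $C$. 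Thus $F$ attains its maximum on $H$ at some $w^{*}$.

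The next step reads off the volumes at $w^{*}$. Because ties between two sites form a set $C\cap B_\gamma(p,q)$ of measure zero (Lemma~\ref{bisector-volume-lemm}), the integral term is continuously differentiable with $\partial_{w_i}\!\int_C\min_j(\cdots) = -\mu\bigl(C\cap\mathrm{VR}_w(p_i,S)\bigr)$, so $\partial_{w_i}F = \lambda_i-\mu(C\cap\mathrm{VR}_w(p_i,S))$. At the constrained maximum the gradient is parallel to the normal $(1,\dots,1)$ of $H$, i.e.\ $\lambda_i-\mu(C\cap\mathrm{VR}_{w^{*}}(p_i,S))$ is the same constant for every $i$; summing over $i$ and using $\sum_i\mu(C\cap\mathrm{VR}_{w^{*}}(p_i,S))=\mu(C)=\sum_i\lambda_i$ (the omitted diagram lies in finitely many bisectors, hence has measure zero by Lemma~\ref{bisector-volume-lemm}) forces that constant to be $0$. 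This gives $\mu(C\cap\mathrm{VR}_{w^{*}}(p_i,S))=\lambda_i$ for all $i$, proving existence.

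For uniqueness I would argue geometrically. Suppose $w$ and $w'$ both realize the volumes $\lambda_i$, set $\delta=w'-w$, let $M=\max_i\delta_i$, and let $A=\{i:\delta_i=M\}$. If $\delta$ is not constant then $A$ and its complement $B$ are both nonempty. Writing $U_A=\bigcup_{i\in A}\mathrm{VR}_w(p_i,S)$ and $U'_A=\bigcup_{i\in A}\mathrm{VR}_{w'}(p_i,S)$, comparing the defining inequalities shows $U_A\subseteq U'_A$ up to a null set: if $z$ is assigned under $w$ to some $i\in A$, then passing to $w'$ lowers every $A$-value $d_{p_k}(z)-w'_k$ by exactly $M$ and every $B$-value by strictly less, so $z$ still prefers an index of $A$. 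Since both weightings give the same total $A$-volume $\sum_{i\in A}\lambda_i$, this inclusion forces $\mu\bigl(C\cap(U'_A\setminus U_A)\bigr)=0$.

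The contradiction then comes from connectedness. Because $\lambda_i>0$, both $C\cap U_A$ and $C\cap U_B$ have positive measure, hence are nonempty; being disjoint open sets whose union omits only the measure-zero diagram, pathwise connectedness of $C$ forces their common boundary inside $C$ to be nonempty. At a boundary point $z_0$ where a single pair $p_i,p_j$ with $i\in A$, $j\in B$ is tied, I would take a small ball in which only $p_i$ and $p_j$ compete; there the interface is the bisector $B_{w_i-w_j}(p_i,p_j)$, and passing to $w'$ replaces it by $B_{w_i-w_j+(M-\delta_j)}(p_i,p_j)$ with $M-\delta_j>0$. By the strict monotonicity in Definition~\ref{admi-defi} the sets $R_\gamma$ gain positive measure as $\gamma$ increases, so a set of positive measure switches from $j\in B$ to $i\in A$ and lies in $C\cap(U'_A\setminus U_A)$, contradicting the null-set conclusion. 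Hence $\delta$ is constant, $w$ is unique up to an additive constant, and since the regions are unchanged by adding a constant to all weights, the parts $C_i=C\cap\mathrm{VR}_w(p_i,S)$ are unique. I expect this final step — certifying that the interface is genuinely crossed with positive measure, and handling boundary points where more than two sites meet — to be the main obstacle, resolved by combining Definition~\ref{admi-defi} with Lemma~\ref{bisector-volume-lemm}.
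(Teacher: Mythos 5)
Your existence argument is correct but follows a genuinely different route from the paper. You maximize the concave dual potential $F(w)=\int_C\min_i(d_{p_i}(z)-w_i)\,\mathrm{d}\mu(z)+\sum_i\lambda_i w_i$ on the hyperplane $\sum_i w_i=0$ and read off the volumes from the first-order condition; this is in the spirit of the variational argument of Aurenhammer et al.~\cite{aha-mttls-98} and of Kantorovich duality. The paper instead minimizes the quadratic penalty $\Phi(w)=\sum_i\bigl(\mu(C\cap\mathrm{VR}_w(p_i,S))-\lambda_i\bigr)^2$ over a compact box $[0,D]^n$ and shows by an explicit perturbation (lowering the weights of the sites of maximal excess by a carefully calibrated $\delta$) that a nonzero minimum is impossible. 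Your route is shorter and conceptually cleaner, but it leans on one analytic step that you state rather than prove: the differentiability of the integral term with $\partial_{w_i}F=\lambda_i-\mu(C\cap\mathrm{VR}_w(p_i,S))$. This does hold --- the pointwise one-sided derivatives of $w\mapsto\min_j(d_{p_j}(z)-w_j)$ disagree only on the tie set, which has measure zero by Lemma~\ref{bisector-volume-lemm}, and the difference quotients are bounded by $1$, so dominated convergence applies --- but it deserves a sentence. The paper's approach buys elementarity (only continuity of $\gamma\mapsto\mu(C\cap R_\gamma(p,q))$ and the intermediate value theorem are used, no derivatives of a measure-valued map), at the cost of a longer computation.

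Your uniqueness argument is structurally the same as the paper's (Section~\ref{unique-sec}, Lemma~\ref{decrease-lemm}): compare the top level set $A$ of $w'-w$ with its complement, show the $A$-regions can only grow, and use path-connectedness plus the intermediate value theorem to exhibit a positive-measure set that actually switches sides. However, the step you yourself flag is a genuine gap as written: you want a boundary point $z_0$ at which only a \emph{single} pair $p_i,p_j$ with $i\in A$, $j\in B$ is tied, and nothing guarantees such a point exists --- every interface point could involve three or more sites. The clean fix, which is what the paper does, is to never descend to individual bisectors: work with the aggregated functions $d_A(z)=\min_{i\in A}(d_{p_i}(z)-w_i)$ and $d_B(z)=\min_{j\in B}(d_{p_j}(z)-w_j)$, take a path in $C$ from a point where $d_A<d_B$ to a point where $d_A>d_B$, and use the intermediate value theorem to find $p$ with $d_A(p)-d_B(p)$ equal to a value strictly between $0$ and the gap $M-\max_{j\in B}\delta_j$. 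Such a $p$ has an open neighborhood in $C$ lying outside $U_A$ but inside $U'_A$ (up to the measure-zero intra-group ties), and open nonempty sets have positive $\mu$-measure since $\mu$ is mutually continuous with Lebesgue measure. This replaces your two-site localization and the appeal to the monotonicity of $\gamma\mapsto\mu(C\cap R_\gamma(p_i,p_j))$ entirely; with that substitution your proof is complete.
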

\begin{proof}
The function
\[
     \Phi(w) := \sum_{i=1}^n \bigl( \mu(C \cap \mbox{VR}_w(p_i,S)) - \lambda_i   \bigr)^2
\]
measures how far $w$ is from fulfilling the theorem.
Since each function $\gamma \mapsto \mu(C \cap R_\gamma(p_i,p_j))$ is
continuous by Definition~\ref{admi-defi},
and because
\begin{multline*}
\lvert \mu(C \cap \mbox{VR}_w(p_i,S)) - \mu(C \cap \mbox{VR}_{w'}(p_i,S)) \rvert \leq\\
\sum_{j \not= i} \lvert \mu(C \cap R_{w_i-w_j}(p_i,p_j)) -  \mu(C \cap R_{w'_i-w'_j}(p_i,p_j))\rvert,
\end{multline*}
we conclude that the function $\Phi$ is continuous, too. 

Let 
$      D := \max \{\,M_{pq} \mid p \not= q \,\}$      
with $M_{pq}$ as in
Definition~\ref{admi-defi}.
Note that $m_{pq}=-M_{qp}$ and $m_{pq}<M_{pq}$ together imply that $D > 0$ is a positive constant.
On the compact set $[0,D]^n$, the function $\Phi$ attains its minimum value at some argument $w$.
If $\Phi(w) = 0$ we are done. 
Suppose that $\Phi(w) >0$. Since the volumes of the Voronoi regions inside $C$ add up to~$\mu(C)$,
there must be some sites $p_j$ whose Voronoi regions have too large an intersection with $C$ i.e., of volume $> \lambda_j$,
while other region's intersections with $C$ are too small.

We now show that by decreasing the weights of some points of $S$ we can decrease the value of $\Phi$. 
For any point $p_i\in S$ we consider the continuous {``excess'' function}
$$\phi_{p_i}(w) := \mu(C\cap \mbox{VR}_w(p_i,S)) - \lambda_i$$ that indicates by which amount of volume the region of $p_i$ is too large or too small.
We have $$\sum_{i=1}^n\phi_{p_i}(w)=0.$$
Let $T \subseteq S$ be the set of points $p_i$ for which $\phi_{p_i}(w)$ attains its maximum value, $\tau$.
By assumption, $T$ is neither empty nor equal to $S$.
We will reduce the weights of the points in $T$ by the same amount $\delta$, obtaining a new weight vector $w'$.
Note that, by construction, 
 $\mbox{VR}_w(t,S) \supseteq \mbox{VR}_{w'}(t,S)$
 for any site $t\in T$,
 and $\mbox{VR}_w(s,S) \subseteq \mbox{VR}_{w'}(s,S)$, 
 for any site  $s\in S \setminus T$.

Let $w'(\delta)$ denote the weight vector $(w'_1,\dots, w'_n)$ where $w'_i := w_i - \delta$ if $p_i\in T$, and $w'_i := w_i$ otherwise.
The volume of the set of points of $C$, that is assigned by $V_{w}(S)$ to some point in $T$ and by $V_{w'(\delta)}(S)$ to some point in $S\setminus T$ is \[\mbox{Loss}(\delta) := \sum_{t\in T}\bigl( \phi_t(w)-\phi_t(w'(\delta))\bigr)\]
Let $\tau := \max_{p\in S}\phi_{p}(w) > 0$ and $\tau' := \max_{s\in S\setminus T}\phi_s(w) < \tau$.
Our goal is to choose $\delta$ in such a way that
\begin{equation}
  \label{eq:loss}
  \text{Loss}(\delta) = \frac{\tau-\tau'}{2n}
\end{equation}
and to show that this choice decreases $\Phi$ strictly.
Since $\mbox{Loss}(0) = 0$ and $\mbox{Loss}(\delta)$ is a monotone and continuous function,
our aim is to solve \eqref{eq:loss} by the intermediate value theorem.
For all $\delta > 2D$ we observe that $\mbox{Loss}(\delta) = \sum_{p_i\in T}(\tau + \lambda_i) > \tau$ holds, since in this case the Voronoi region $\mbox{VR}_{w'(\delta)}(p_i,S)$ of every point $p_i\in T$ contains no point of $C$.
Now $0 < \frac{\tau-\tau'}{2n} < \tau$ implies that we can indeed solve \eqref{eq:loss} by the intermediate value theorem.
%
The latter inequality is evident if $\tau' \geq 0$. Otherwise, it follows from $-\tau'\leq
 -\sum_{s\in S \setminus T}\phi_s(w)
= \sum_{s\in  T}\phi_s(w)
 = |T|\cdot \tau \leq (n-1)\tau$.

In the following let $w' := w'(\delta)$, for short.
In order to prove the theorem we use the following reformulation of the statement $\Phi(w)>\Phi(w')$:
\begin{eqnarray}
\sum_{t\in T} \bigl(\phi_t(w)^2 - \phi_t(w')^2\bigr) > \sum_{s\in S\setminus T}\bigl( \phi_s(w')^2 - \phi_s(w)^2\bigr)\label{final-eqn}
\end{eqnarray}
Now, in order to
 prove 
 inequality~$(\ref{final-eqn})$, 
 we 
 give an upper bound on the right-hand side of the inequality,
which will be compared with a lower bound on the left-hand side of the inequality.
Let $\varepsilon_s \geq 0$ denote the volume increase, in $C$, of the region of site $s \in S \setminus T$.
By the choice of $\delta$, we have
 $\sum_{s\in S\setminus T}\varepsilon_s =\frac{\tau - \tau'}{2n}$, and
\begin{align*}
\sum_{s\in S\setminus T}\bigl(\phi_s(w')^2 - \phi_s(w)^2\bigr)
&= \sum_{s\in S\setminus T}\bigl((\phi_s(w)+\varepsilon_s)^2 - \phi_s(w)^2\bigr)\\
&= \sum_{s\in S\setminus T} 2\varepsilon_s\phi_s(w) + \sum_{s\in S\setminus T}\varepsilon_s^2\\
&\leq 2\tau'\sum_{s\in S\setminus T}\varepsilon_s + \Bigl(\sum_{s\in S\setminus T}\varepsilon_s\Bigr)^2\\
&= 2\tau'\frac{\tau - \tau'}{2n} + \left(\frac{\tau - \tau'}{2n}\right)^2
\end{align*}

We now give a lower bound on the left-hand side of~$(\ref{final-eqn})$.
If $\varepsilon_t \geq 0$ denotes the volume decrease, in $C$, of the region of site $t\in T$, then the corresponding calculation for the points in $T$ yields:
\begin{align*}
\sum_{t\in T}\bigl(\phi_t(w)^2 - \phi_t(w')^2\bigr)
&= \sum_{t\in T}\bigl(\phi_t(w)^2 - (\phi_t(w)-\varepsilon_t)^2\bigr)\\
&= \sum_{t\in T} 2\phi_t(w)\varepsilon_t - \sum_{t\in T}\varepsilon_t^2\\
&\geq 2\tau\sum_{t\in T}\varepsilon_t - \Bigl(\sum_{t\in T}\varepsilon_t\Bigr)^2\\
&=2\tau\frac{\tau-\tau'}{2n} - \left(\frac{\tau-\tau'}{2n}\right)^2
\end{align*}
where we have used the same arguments as above and also that $\phi_t(w) = \tau$, by definition, for all $t\in T$.
%
%
%
It is easy to verify the inequality
\begin{eqnarray}
2\tau\frac{\tau-\tau'}{2n} - \left(\frac{\tau-\tau'}{2n}\right)^2
> 2\tau'\frac{\tau - \tau'}{2n} + \left(\frac{\tau - \tau'}{2n}\right)^2\nonumber
\end{eqnarray}
for
$\tau-\tau' > 0$ and $n \geq 2$. 
Hence inequality~$(\ref{final-eqn})$ holds. 

We have now shown the existence of a weight vector $w'$ with $\Phi(w')<\Phi(w)$.
%
%
If $w'$ belongs to $[0,D]^n$, we are done. Otherwise,
%
%
suppose the maximum weight $w'_m$ occurs for some point $p'_m$.
We add $D-w'_m$ to all weights. This does not change the Voronoi diagram defined by $w'$, but now the maximum weight $w'_m$ is exactly $D$. 
Next we observe that if now the weight of some point $p_j$ is $<0$, then its Voronoi region is empty, because $C$ is contained in the set $R_{w'_m-w'_j}(p_m,p_j)$, by definition of $D$. Assigning weight $0$ to each of those points also results in empty Voronoi regions for those points. Altogether, we now have $0 \leq w'_i \leq D$ for all $p_i\in S$. 

We have shown the existence of a weight vector $w'\in [0,D]^n$ that satisfies $\Phi(w')<\Phi(w)$.
This contradicts the minimality of $\Phi(w)$.
Hence the minimum value of $\Phi$ is $0$.

Uniqueness of the partition 
 will be discussed in Section~\ref{unique-sec}.
\end{proof}

\section{Optimality}     \label{opti-sec}
Again, let $d_{p}(\cdot)$,  $p \in S = \{p_1, p_2, \ldots, p_n\}$,
be an admissible system as in Definition~\ref{admi-defi}, 
and let $C$ denote a bounded and open subset of $\IR^d$. 
Moreover, we are given real numbers $\lambda_i >0$ whose sum equals~$\mu(C)$.

By Theorem~\ref{partition-theo} there
exists a weight vector $w=(w_1, w_2, \ldots, w_n)$ satisfying
\[
      \mu(C \cap \mbox{VR}_w(p_i,S)) = \lambda_i  \mbox{ for } i=1, \ldots, n. 
\]
Now we prove that this subdivision of $C$
minimizes the transportation cost, i.~e., the average distance from each 
point to its site.
It is convenient to describe partitions of $C$ by $\mu$-measurable maps $f\colon C\to S$
where, for each $p \in S$, $f^{-1}(p)$ denotes the region assigned to $p$.
Let $F_\Lambda$ denote the set of those maps $f$ satisfying
$\mu(f^{-1}(p_i)) = \lambda_i 
$ for $i = 1, \ldots, n$.

\begin{theorem}                       \label{opti-theo}
The partition of $C$ into regions $C_i := C \cap \mathrm{VR}_w(p_i,S)$
minimizes
\[\mathrm{cost}(f) := \int_C \! d_{f(z)}(z) \, \mathrm{d} \mu(z)\]
over all maps $f \in F_\Lambda$. Any other partition of minimal cost differs at most by
sets of measure zero from $(C_i)_i$.
\end{theorem}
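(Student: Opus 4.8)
The plan is to exploit the defining property of the weighted Voronoi diagram: by construction, each point $z$ of a region $C_i$ is a point at which the \emph{reduced cost} $d_{p_i}(z) - w_i$ is smallest among all sites. This converts the global optimization over $F_\Lambda$ into a pointwise comparison.

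First I would rewrite the cost of an arbitrary map $f \in F_\Lambda$ by adding and subtracting the weight of the assigned site:
\[
  \mathrm{cost}(f) = \int_C \bigl( d_{f(z)}(z) - w_{f(z)} \bigr)\, \mathrm{d}\mu(z) + \int_C w_{f(z)}\, \mathrm{d}\mu(z).
\]
The second integral equals $\sum_{i=1}^n w_i\,\mu(f^{-1}(p_i)) = \sum_{i=1}^n w_i \lambda_i$, the same constant for every $f \in F_\Lambda$. Hence minimizing $\mathrm{cost}$ over $F_\Lambda$ is equivalent to minimizing the reduced cost $\int_C (d_{f(z)}(z) - w_{f(z)})\,\mathrm{d}\mu(z)$.

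Next let $f^*$ be the map induced by the Voronoi partition, with $f^*(z) = p_i$ for $z \in C_i$; points on the diagram $V_w(S)$ are assigned to any tied minimizer, say the one of smallest index. Since each $R_\gamma(p,q)$ is open, the regions are Borel sets, so $f^*$ is measurable, and $f^* \in F_\Lambda$ by the choice of $w$. By the definition of $\mathrm{VR}_w(p_i,S)$, the value $d_{f^*(z)}(z) - w_{f^*(z)}$ realizes the minimum of $d_p(z) - w_p$ over $p \in S$ for \emph{every} $z \in C$, so
\[
  d_{f^*(z)}(z) - w_{f^*(z)} \le d_{f(z)}(z) - w_{f(z)} \qquad \text{for all } z \in C.
\]
Integrating and adding back the common constant $\sum_i w_i \lambda_i$ gives $\mathrm{cost}(f^*) \le \mathrm{cost}(f)$, which proves optimality.

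For uniqueness I would argue that equality forces $f = f^*$ almost everywhere. If $f \in F_\Lambda$ also attains the minimum, then the nonnegative integrand $g(z) := \bigl(d_{f(z)}(z) - w_{f(z)}\bigr) - \bigl(d_{f^*(z)}(z) - w_{f^*(z)}\bigr)$ has integral zero, hence $g = 0$ $\mu$-almost everywhere. Now $g(z) = 0$ can hold only if $f(z) = f^*(z)$ or else $z$ lies on one of the bisectors $B_{w_i - w_j}(p_i,p_j)$, where two reduced costs tie. The latter points form a subset of the finite union $\bigcup_{i \ne j} B_{w_i - w_j}(p_i,p_j)$, which has $\mu$-measure zero by Lemma~\ref{bisector-volume-lemm}. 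Therefore $\{z \in C : f(z) \ne f^*(z)\}$ has measure zero. The hard part is exactly this boundary bookkeeping: the pointwise minimizer is unique only off the bisectors, and it is Lemma~\ref{bisector-volume-lemm} that renders the exceptional set negligible.
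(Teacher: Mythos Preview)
Your proposal is correct and follows essentially the same route as the paper: both add and subtract the weight $w_{f(z)}$, observe that $\int_C w_{f(z)}\,\mathrm{d}\mu = \sum_i w_i\lambda_i$ is the same for every $f\in F_\Lambda$, use the pointwise Voronoi inequality $d_{f^*(z)}(z)-w_{f^*(z)}\le d_{f(z)}(z)-w_{f(z)}$ to conclude optimality, and invoke Lemma~\ref{bisector-volume-lemm} to show that equality off the bisectors forces $f=f^*$ $\mu$-almost everywhere. The only cosmetic difference is that the paper frames uniqueness as a contrapositive (a set of disagreement with positive measure yields a strict inequality), whereas you argue that a nonnegative integrand with zero integral vanishes almost everywhere; the content is the same.
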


If the measure $\mu$ were a discrete measure concentrated on a finite
set $C$ of points, the optimum partition problem would turn into the
classical transportation problem, a special type of network flow
problem on a bipartite graph. The weights $w_i$ would be obtained as
dual variables. 
The following proof mimics the
optimality proof for the discrete case.

\begin{proof}

Let $f_w$ be defined by $f_w (C_i)=\{p_i\}$ for all $i$, and $f\in F_\Lambda$ be another map.
If we define $c(z,p) := d_p(z)$, then the cost of map $f$ is
\begin{align}
\text{cost}(f)
&= \int_C \left(c(z,f(z)) - w_{f(z)} + w_{f(z)}\right) \, \mathrm{d} \mu(z)
\nonumber \\
&= \int_C \left(c(z,f(z)) - w_{f(z)}\right) \, \mathrm{d} \mu(z) + \int_C w_{f(z)} \, \mathrm{d} \mu(z)\label{cost-estimate-line-one-eqn}\\
&\geq \int_C \left(c(z,f_w(z)) - w_{f_w(z)}\right) \, \mathrm{d} \mu(z) + \int_C w_{f(z)} \, \mathrm{d} \mu(z)\label{cost-estimate-line-two-eqn}\\
&= \text{cost}(f_w)
 - \int_Cw_{f_w(z)} \, \mathrm{d} \mu(z) + \int_C w_{f(z)} \, \mathrm{d} \mu(z)\label{cost-last-line-eqn}
\end{align}
where the inequality between lines~$(\ref{cost-estimate-line-one-eqn})$ and~$(\ref{cost-estimate-line-two-eqn})$ follows from the fact that
\begin{equation}
  \label{eq:weight-better}
c(z,f_w(z)) - w_{f_w(z)} \leq c(z,f(z)) - w_{f(z)},
\end{equation}
 by the definition of additively weighted Voronoi diagrams. Here we have assumed for simplicity that the map $f_w$ assigns every point $p\in V_w(S)$ to \emph{some} weighted nearest neighbor of $p$ in $S$, according to some tie-break rule. Since Lemma~\ref{bisector-volume-lemm} implies $\mu(C \cap V_w(S)) = 0$, changing the assignment of those points has no influence on $\mbox{cost}(f_w)$.

Both maps $f$ and $f_w$ partition $C$ into regions of volume $\lambda_i$, $1\leq i\leq n$, i.~e., $\mu(f_w^{-1}(p_i)) = \mu(f^{-1}(p_i)) = \lambda_i$ for all $p_i\in S$. Therefore, 
\begin{equation*}
\int_C w_{f(z)} \, \mathrm{d} \mu(z)
= \sum_{p_i\in S}\int_{f^{-1}(p_i)}w_{f(z)} \, \mathrm{d} \mu(z)
= \sum_{p_i\in S} \lambda_i w_i,
\end{equation*}
and the same value is obtained for $f_w$:
\begin{equation*}
\int_C w_{f_w(z)} \, \mathrm{d} \mu(z)
= \sum_{p_i\in S}\int_{f_w^{-1}(p_i)}w_{f_w(z)} \, \mathrm{d} \mu(z)
= \sum_{p_i\in S} \lambda_i w_i.
\end{equation*}
%
%
%
Now the optimality claim follows from~$(\ref{cost-last-line-eqn})$.

We still have to prove uniqueness of the solution, up to a set of measure~0.
Assume that $f$ differs from $f_w$ on some set $G$ of positive measure.
We are done if we show that the inequality between~$(\ref{cost-estimate-line-one-eqn})$ and~$(\ref{cost-estimate-line-two-eqn})$
holds as a strict inequality.
We can remove the points of $V_w(S)$ from $G$, since they have measure 0.
Then \eqref{eq:weight-better} holds as a strict inequality on $G$.
The difference
between~$(\ref{cost-estimate-line-one-eqn})$ and~$(\ref{cost-estimate-line-two-eqn})$ is then the integral of the positive function
$g(z) := c(z,f_w(z)) - w_{f_w(z)} - \bigl( c(z,f(z)) - w_{f(z)}\bigr)$ over a
set $G$ of positive measure. Such an integral 
$\int_{z\in G} g(z)\,\mathrm{d}\mu(z)$
has always a positive value:
the countably many sets
 $\{\,z\in G \mid \frac1k >g(z)\ge \frac1{k+1}\,\}$
and
 $\{\,z\in G \mid g(z)\ge 1\,\}$
partition $G$, and thus their measures add up to $\mu(G)$. Therefore,
at least one of these sets has positive measure, and it follows directly
that the integral is positive.
\end{proof}

\section{Uniqueness}\label{unique-sec}

In contradistinction to Theorem~\ref{opti-theo}, the weight vector $w$
and the resulting weighted Voronoi partition
 in Theorem~\ref{partition-theo} is not unique:
if $C$ is disconnected, changing $w$ may move the
bisectors between different connected components of $C$ without
affecting the partition, or only changing it in boundary points of $C$.



However, uniqueness can be obtained under the additional assumption that
 $C$ is pathwise connected. 
 Then
 in Theorem~\ref{partition-theo},
the weight vector $w=(w_1,\dots,w_n)$ that partitions $C$ into regions of prescribed size is unique up to addition of the same constant to all $w_i$.
Consequently, the parts in the
weighted Voronoi partition
outside of $C$
are also uniquely determined.

The proof uses the following lemma.

\begin{lemma}\label{decrease-lemm}
  Assume that $C$ is path-connected, in addition to being an open
  bounded set.  Consider a partition of the point set $S$ in two
  nonempty sets $T$ and $S\setminus T$.  Suppose, that, for a given
  weight vector $w$, all regions $C\cap \mbox{VR}_w(p,S)$ have
  positive measure.

  If we increase the weight of every point in $T$ by the same constant
  $\delta>0$, then $\mu(C\cap \mbox{VR}_w(t,S))$ increases, for some
  $t\in T$.
\end{lemma}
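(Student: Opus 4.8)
The plan is to reduce the statement to a one-dimensional intermediate-value argument. First I would introduce the two lower envelopes
\[
g_T(z):=\min_{t\in T}\bigl(d_t(z)-w_t\bigr),\qquad g_{S\setminus T}(z):=\min_{s\in S\setminus T}\bigl(d_s(z)-w_s\bigr),
\]
which are continuous because each $d_p$ is, and set $h:=g_T-g_{S\setminus T}$. The point of this notation is that raising every weight in $T$ by $\delta$ (write $w'$ for the new weight vector) replaces $g_T$ by $g_T-\delta$ and leaves $g_{S\setminus T}$ unchanged. Hence a point of $C$ is weighted-nearest to a site of $T$ before the change exactly on $\{z\in C:h(z)<0\}$ and after the change exactly on $\{z\in C:h(z)<\delta\}$, in each case up to the Voronoi diagram. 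Since $V_w(S)$ and $V_{w'}(S)$ are finite unions of bisector pieces, Lemma~\ref{bisector-volume-lemm} gives $\mu(C\cap V_w(S))=\mu(C\cap V_{w'}(S))=0$, so these exceptional sets are harmless. As every region of a site $t\in T$ can only grow when we raise the weights in $T$ (we have $\mbox{VR}_w(t,S)\subseteq\mbox{VR}_{w'}(t,S)$), it suffices to prove that the \emph{total} mass assigned to $T$ inside $C$ grows strictly; then at least one summand $\mu(C\cap\mbox{VR}_w(t,S))$ must grow. By the description above, this total increase equals $\mu\bigl(\{z\in C:0\le h(z)<\delta\}\bigr)$, so the lemma reduces to showing that this quantity is positive.

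Next I would produce a point of $C$ lying in the open band $\{0<h<\delta\}$. By hypothesis every region $C\cap\mbox{VR}_w(p,S)$ has positive measure, and $T$ and $S\setminus T$ are both nonempty, so the open sets $U:=\{z\in C:h(z)<0\}$ and $W:=\{z\in C:h(z)>0\}$ are both nonempty (each contains a full region of positive measure). Choose $u_0\in U$ and $w_0\in W$ and, using that $C$ is path-connected, a path $\gamma$ from $u_0$ to $w_0$ inside $C$. The composition $h\circ\gamma$ is continuous with $h(\gamma(0))<0<h(\gamma(1))$, so by the intermediate value theorem it attains a value strictly between $0$ and $\min\{\delta,h(\gamma(1))\}$; that is, there is a point $z^\ast\in C$ with $0<h(z^\ast)<\delta$. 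Hence $\{z\in C:0<h(z)<\delta\}$ is a nonempty open set, and since $\mu$ and Lebesgue measure are mutually continuous it has positive $\mu$-measure. As this set is contained in $\{z\in C:0\le h(z)<\delta\}$, the total $T$-mass increases strictly, which finishes the proof.

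The step I expect to be the main obstacle — and the only place where connectedness is genuinely used — is guaranteeing that this band of swept points has positive measure. It is tempting to believe this is automatic, but a priori the part of $C$ assigned to $T$ and the part assigned to $S\setminus T$ could be separated so that moving the bisectors sweeps no volume of $C$; path-connectedness rules this out by forcing the continuous function $h$ to pass through the interval $(0,\delta)$ on $C$, after which mutual continuity of $\mu$ upgrades ``nonempty open'' to ``positive measure''. The remaining care is purely bookkeeping: checking that identifying the union of $T$-regions with a sublevel set of $h$ discards only the measure-zero diagram, which is precisely what Lemma~\ref{bisector-volume-lemm} supplies.
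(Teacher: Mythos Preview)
Your proof is correct and follows essentially the same approach as the paper: introduce the two lower envelopes (your $g_T,g_{S\setminus T}$ are the paper's $d_T,d_{S\setminus T}$), reduce to showing the total $T$-mass increases, and use path-connectedness together with the intermediate value theorem to produce a point in the swept band, whose open neighborhood then has positive measure. The only cosmetic difference is that the paper takes the second endpoint in $\{h>\delta\}$ (handling $\{h>\delta\}\cap C=\emptyset$ separately) and applies the IVT at the value $\delta/2$, whereas you take it in $\{h>0\}$ and target any value in $(0,\min\{\delta,h(w_0)\})$; your version avoids the case split but is otherwise the same argument.
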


\begin{proof}
In this section, we will omit the reference to the set $S$ from the Voronoi regions and simply write
$ \mbox{VR}_w(p)$ instead of
$ \mbox{VR}_w(p,S)$, since the point set $S$ is fixed.

It is clear that the regions $\mbox{VR}_w(t)$
for $t\in T$ can only
grow, in the sense of gaining new points, but we have to show that
this growth results in a strict increase of~$\mu(C\cap\mbox{VR}_w(t))$
for some $t\in T$.

Consider the continuous function
$$d_T(z) := \min \{\,d_p(z)-w_p\mid p\in T\,\}$$
and the function
$d_{S\setminus T}(z)$, which is defined analogously,
and define 
\begin{align*}
\mathrm{VR}_w(T) & := \{\,z\in \IR^d \mid d_T(z) < d_{S\setminus T}(z)\,\}
\\
\mathrm{VR}_w(S\setminus T) & := \{\,z\in \IR^d \mid d_T(z) > d_{S\setminus T}(z)\,\}
\end{align*}
These two sets partition $\IR^d$, apart from some ``generalized bisector''
 where equality holds.

Roughly,
$\mathrm{VR}_w(T)$ is obtained by merging
 all Voronoi regions $ \mbox{VR}_w(t)$ for $t\in T$ into one set,
together with some bisecting boundaries between them.
Since these boundaries have zero measure inside $C$, we have
\begin{equation}
  \label{eq:sum-parts}
 \mu(C\cap \mbox{VR}_w(T)) 
= \sum_{t\in T} \mu(C\cap \mbox{VR}_w(t))
\end{equation}
Let $w = (w_1,\dots, w_n)$ and $w' = (w'_1,\dots, w'_n)$, where $w'_i
:= w_i+\delta$ if $s_i\in T$ and $w'_i := w_i$ otherwise.
It suffices to show that in going from $w$ to $w'$,
$ \mu(C\cap \mbox{VR}_w(T)) $ increases, since then, by
\eqref{eq:sum-parts}, one of the constituent Voronoi regions
$\mu(C\cap \mbox{VR}_w(t))$ must increase.
As mentioned, the set
$C\cap\mbox{VR}_w(T)$ can only grow, but we have to show that this
growth results in a strict increase of~$\mu$.

If $C\cap\mathrm{VR}_{w'}(S\setminus T)=\emptyset$, we are done because
$\mu(C\cap \mbox{VR}_{w'}(T))$ has increased to $\mu(C)$.
Otherwise, consider a path 
in $C$ connecting some point $p_1 \in
\mbox{VR}_w(T)$ with some point $p_2 \in \mathrm{VR}_{w'}(S\setminus
T)$.
We have, by definition,
$d_T(p_1) < d_{S\setminus T}(p_1)$ and
$d_T(p_2)-\delta > d_{S\setminus T}(p_2)$.
Hence, by the intermediate value theorem, we can find a point $p$
on the path 
 with
$d_T(p)-\delta/2= d_{S\setminus T}(p)$.
This point is an interior point of
$C\cap \mbox{VR}_{w}(S\setminus T)$
and $C\cap \mbox{VR}_{w'}(T)$, and hence there is a neighborhood of $p$
which adds a positive measure to
$C\cap \mbox{VR}_{w}(T)$ when going from $w$ to $w'$.
\end{proof}

To finish the proof of uniqueness in Theorem~\ref{partition-theo},
suppose that two weight vectors $w, w'$ do the job of producing the
desired measures $\lambda_i$ for the regions, but $w' - w \not= (c, c,
\ldots, c)$ for any constant $c$.
 We may assume,  by adding a constant to all entries, that $w\le w'$, and $w_i =w'_i$ for some $i$.
We will now gradually change $w$ into $w'$ by modifying its entries.
Let $T := \{\,s_i\in S \mid w'_i-w_i = \max_j (w'_j-w_j)\,\}$.
 By assumption, $T$ is neither empty nor equal to~$S$.
We increase the weights of the points in $T$, leaving
the remaining values fixed. The amount $\delta$ is chosen in such
a way that
$w'_i-w_i$ for $i\in T$ does not decrease below the remaining
differences $w'_j-w_j$ for $j\in S\setminus T$.
By Lemma~\ref{decrease-lemm}, the measure of the region assigned to
some point $t_0\in T$ strictly increases in this process.  When the
limiting value $\delta$ is reached, the set $T$ of points where
$w'_i-w_i$ achieves its maximum is enlarged, and the whole process is
repeated until $w=w'$. During this process, $t_0$ will always be among
the points whose weight is increased, and therefore, the measure of
its region can never shrink back to the original value---a contradiction.
\qed


\section{Conclusion and future work}

We have given a geometric proof for the fact that additively weighted Voronoi diagrams
can optimally solve some cases of the Monge-Kantorovich transportation problem, where one measure has
finite support. Surprisingly, the existence of an optimal solution---the main mathematical challenge in the general
case---is an easy consequence of our proof. In remains to be seen to which extent our assumptions on the
distance functions $d_{p_i}$ can be further generalized.

\bibliographystyle{model1-num-names}







\end{document}